\newtheorem{theorem}{Theorem}
\newtheorem{lemma}[theorem]{Lemma}
\newtheorem{proposition}[theorem]{Proposition}
\newtheorem{remark}[theorem]{Remark}
\newenvironment{proof}[1][Proof]{\noindent\textbf{#1.} }{\ \rule{0.5em}{0.5em}}
\begin{document}

\title{Outer billiards in the complex hyperbolic plane}
\author{Yamile Godoy and Marcos Salvai~ \thanks{%
This work was supported by Secretar\'{\i}a de Ciencia y T\'ecnica de la Universidad
Nacional de C\'ordoba and Consejo Nacional de Investigaciones Cient\'ificas
y T\'ecnicas; in particular, the PIBAA awarded to the first author by the last institution.}}
\date{ }
\maketitle

\begin{abstract}
Given a quadratically convex compact connected oriented hypersurface $N$ of the complex
hyperbolic plane, we prove that the characteristic rays of the symplectic
form restricted to $N$ determine a double geodesic foliation of the exterior 
$U$ of $N$. This induces an outer billiard map $B$ on $U$. We prove that $B$
is a diffeomorphism (notice that weaker notions of strict convexity may allow the
billiard map to be well-defined and invertible, but not smooth) and
moreover, a symplectomorphism. These results generalize known geometric
properties of the outer billiard maps in the hyperbolic plane and complex Euclidean space.
\end{abstract}

\noindent Key words and phrases: outer billiards, complex hyperbolic plane,
symplectomorphism \medskip

\noindent Mathematics Subject Classification 2020. 32Q15, 37C83, 53C22, 53C35, 53D22 

\section{Introduction}

Given a smooth, closed, strictly convex curve $\gamma $ in the plane, the 
\textbf{dual} or \textbf{outer billiard map} $B$ associated with $\gamma $
is defined as follows: Let $p$ be a point outside of $\gamma $. There are
two tangent lines to $\gamma $ through $p$; choose the right one from the
viewpoint of $p$, and define $B(p)$ as the reflection of $p$ in the point of
tangency (see Figure \ref{fig:outerplane}).

\begin{figure}[ht!]
	\label{fig:outerplane} 
	\centerline{
		\includegraphics[width=2in]
		{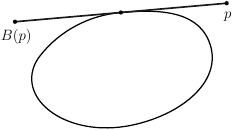}
	}
	\caption{The outer billiard map in the plane}
\end{figure}

The dual billiard was introduced by B.\ Neumann \cite{Neumann} and
popularized by J.\ Moser \cite{moser1, moser2}, who considered it as a toy
model for celestial mechanics. The outer billiard map has since been studied
in a number of settings; see \cite{TabachMathIntell, TabachnikovOuter,
	TabachnikovBilliards, bookT} for surveys.

In \cite{Tabachnikov}, Tabachnikov generalizes the outer billiard map in the
plane to the standard symplectic space $(\mathbb{R}^{2n},\omega )$ as
follows: Given a quadratically convex compact smooth hypersurface $M$ in $%
\mathbb{R}^{2n}$, the restriction of $\omega $ to each tangent space $T_{q}M$
has a one-dimensional kernel, called the characteristic line. Tabachnikov
proves that for each point $p$ outside $M$, exactly two characteristic lines
of $M$ pass through $p$. Thus, the choice of an orientation induces a
well-defined outer billiard map. Moreover, he proves that the map is a
symplectomorphism. A far reaching generalization of the results has been
obtained in the recent paper \cite{TabachNuevo}. On the other hand, in \cite%
{TabachHypPlane} Tabachnikov studies the outer billiard map in the
hyperbolic plane $H^{2}$, and proves in particular that it preserves the
area associated to the hyperbolic metric. Both in the symplectic and
hyperbolic cases, he addresses dynamical problems.

Outer billiards maps have been recently studied in other ambient spaces. For
instance, in \cite{London}, an outer billiard map is defined in the space of
oriented geodesics $\mathcal{G}$ of a three-dimensional space form and set
in the context of \cite{Tabachnikov} (using that $\mathcal{G}$ is a K\"{a}%
hler manifold). Also, following the approach of \cite{Tabachnikov}, in which
tangent rays in the characteristic directions 
double foliate the exterior of the strictly convex
hypersurface, in \cite{Ibero} the authors find conditions for unit tangent
vector fields on a complete umbilic not totally geodesic hypersurface of a
space form to determine an outer billiard map.

The goal of the present article is to generalize the above-mentioned results
in \cite{Tabachnikov} to the simplest curved analogue of $\mathbb{C}^{2}$,
that is, the complex hyperbolic plane $\mathbb{C}H^{2}$: We define and give
geometrical properties of an outer billiard system in this K\"{a}hler
two-point homogeneous space. It also generalizes the real hyperbolic
two-dimensional case, since $H^{2}$ is the complex hyperbolic line.

Let $N$ be a compact connected oriented hypersurface in $\mathbb{C}H^{2}$
which is quadratically convex, that is, the shape operator at each point is
definite. In particular, by \cite{Alex}, 
it is the border of a convex body and diffeomorphic to a sphere. We call $U$
the exterior of $N$, that is, the connected component of $\mathbb{C}H^{2}-N$
into which the opposite of the mean curvature field points.

We call $J$ the complex structure of $\mathbb{C}H^{2}$. Given a unit tangent
vector $u$ we denote by $\gamma _{u}$ the unique geodesic of $\mathbb{C}%
H^{2} $ whose initial velocity is $u$. Let $F:N\times \mathbb{R}\rightarrow 
\mathbb{C}H^{2}$ be the smooth map defined by 
\begin{equation*}
	F(q,r)=\gamma _{J\nu (q)}(r)\text{,}
\end{equation*}%
where $\nu $ is the unit normal vector field pointing inwards, that is, in
the direction of the mean curvature vector field of $N$. This is our first
result:

\begin{theorem}
	\label{F+F-} Let $N$ be a quadratically convex compact connected oriented hypersurface of $%
	\mathbb{C}H^{2}$. Then, the restrictions of $F$ to $N\times (0,\infty )$ and 
	$N\times (-\infty ,0)$ are diffeomorphisms onto the exterior $U$ of $N$.
\end{theorem}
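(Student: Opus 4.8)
The plan is to prove that $F$ is a local diffeomorphism on each of the two open slabs, that it maps them into $U$, and that each restriction is proper; a covering-space argument, using that $U$ is simply connected, then upgrades each restriction to a global diffeomorphism onto $U$.

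First I would compute the differential of $F$ through Jacobi fields. Fix $q$, write $\gamma=\gamma_{J\nu(q)}$ with unit velocity $T=\gamma'$, and vary $q$ inside $N$ in a direction $w\in T_qN$. The corresponding variation of $F$ is the Jacobi field $Y$ along $\gamma$ with $Y(0)=w$ and $Y'(0)=\tfrac{D}{dt}\big|_0 J\nu=-JSw$, where $S$ is the shape operator (here I use that $J$ is parallel). Since $\mathbb{C}H^2$ is a rank-one symmetric space, the Jacobi equation decouples in the parallel orthonormal frame $\{T,\,JT,\,E_1,\,E_2=JE_1\}$ along $\gamma$, with $E_1$ chosen orthogonal to $T$ and $JT$: the $JT$-component (holomorphic sectional curvature $-4$) and the $E_1,E_2$-components (sectional curvature $-1$) obey decoupled scalar ODEs with $\sinh$/$\cosh$ solutions. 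Two geometric facts drive the computation, both coming from $T(0)=J\nu$ and $JT(0)=-\nu$: first, $Y(0)=w\perp\nu$ forces the $JT$-component of $Y$ to vanish at $0$; second, $-JSw$ has no $T$-component, so the $T$-component of $Y$ is constant.

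Carrying out the resulting $4\times4$ determinant in the variables $(w,s)\in T_qN\oplus\mathbb{R}$, after choosing $E_1$ so that $S$ is block-diagonal on the totally real plane, the cross terms cancel and one is left with
\[
\det dF_{(q,r)}=\pm\tfrac12\sinh(2r)\bigl(\langle SJ\nu,J\nu\rangle\cosh^{2}r+(\det S)\sinh^{2}r\bigr).
\]
Because $S$ is definite, $\langle SJ\nu,J\nu\rangle$ and $\det S$ share the same nonzero sign, so the second factor never vanishes; hence $\det dF_{(q,r)}$ vanishes exactly at $r=0$ (as it must, since the two families meet along $N$) and $F$ is a local diffeomorphism on $N\times(0,\infty)$ and on $N\times(-\infty,0)$. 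I expect this determinant computation to be the technical heart of the argument, the delicate point being the bookkeeping producing the clean cancellation, which rests essentially on the homogeneity of $\mathbb{C}H^2$ (all totally real directions carry the same curvature $-1$).

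It remains to globalize. By \cite{Alex}, $N$ bounds a convex body $K$ with $U=\mathbb{C}H^2\setminus K$; as $\gamma_{J\nu(q)}$ is tangent to the strictly convex $N$ at $q$, it meets $N$ only at $r=0$ and lies in $U$ for $r\neq0$, so each restriction takes values in $U$. Each restriction is proper: in the Hadamard manifold $\mathbb{C}H^2$ one has $d(F(q,r),q)=|r|$, so for compact $\mathcal{K}\subset U$ the parameters $r$ with $F(q,r)\in\mathcal{K}$ are bounded below by $d(\mathcal{K},N)>0$ and above by a diameter bound, confining $F^{-1}(\mathcal{K})$ to a compact slab $N\times[\varepsilon,R]$. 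A proper local diffeomorphism onto a connected manifold is a covering map with open and closed image, hence surjective onto $U$; and since $N\cong S^{3}$ makes $U$ simply connected while $N\times(0,\infty)$ is connected, the covering has a single sheet and is therefore a diffeomorphism onto $U$. The identical argument applies to $N\times(-\infty,0)$.
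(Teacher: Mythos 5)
Your proposal is correct and follows essentially the same route as the paper: the Jacobi-field computation of $dF$ with initial conditions $Y(0)=w$, $Y'(0)=-JSw$, the determinant $\pm\cosh r\,\sinh r\,\bigl(\langle SJ\nu,J\nu\rangle\cosh^{2}r+(\det S)\sinh^{2}r\bigr)$ (which matches the paper's $-\cosh t\sinh t\,(A_{11}\cosh^{2}t+\det A\,\sinh^{2}t)$ exactly), containment of the image in $U$ via Alexander's convex-body theorem combined with quadratic convexity, properness from $d(F(q,r),q)=|r|$ in a Hadamard manifold, and the proper-local-diffeomorphism covering argument using that $N\cong S^{3}$ makes $U$ simply connected. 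Your only deviations are cosmetic: you rotate $E_1$ to block-diagonalize $S$ on the totally real plane (the paper keeps the general symmetric matrix $A$ and lets the cross terms cancel in the determinant), and you note that $\langle SJ\nu,J\nu\rangle$ and $\det S$ share a sign for either orientation of $\nu$, where the paper fixes $\nu$ inward so both are positive.
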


Thus, we have a \textbf{double geodesic ray foliation} of $U$ and this
allows us to define well an \textbf{outer billiard map}, as follows: For $%
t>0 $, \label{symplectic}%
\begin{equation*}
	B:U\rightarrow U,\hspace{1cm}B(\gamma _{J\nu (p)}(-t))=\gamma _{J\nu (p)}(t)%
	\text{.}
\end{equation*}

As a direct consequence of Theorem \ref{F+F-}, we have:

\begin{theorem}
	\label{Bdifeomorph}The outer billiard map $B$ is a diffeomorphism.
\end{theorem}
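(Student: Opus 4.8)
The plan is to deduce Theorem \ref{Bdifeomorph} as a nearly immediate corollary of Theorem \ref{F+F-}, which is exactly how the excerpt frames it ("As a direct consequence"). The key observation is that the billiard map $B$ factors as a composition of the two diffeomorphisms furnished by Theorem \ref{F+F-}. Write $F_{+}$ for the restriction of $F$ to $N\times(0,\infty)$ and $F_{-}$ for its restriction to $N\times(-\infty,0)$; by Theorem \ref{F+F-} both are diffeomorphisms onto $U$.

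First I would make the factorization explicit. Any point of $U$ can be written uniquely as $\gamma_{J\nu(p)}(-t)$ with $t>0$ for a unique pair $(p,-t)\in N\times(-\infty,0)$, since $F_{-}$ is a bijection; concretely $(p,-t)=F_{-}^{-1}(x)$. The definition of $B$ then reads $B(x)=\gamma_{J\nu(p)}(t)=F_{+}(p,t)$. Introducing the reflection diffeomorphism $\sigma\colon N\times\mathbb{R}\to N\times\mathbb{R}$, $\sigma(q,r)=(q,-r)$, which is clearly smooth with smooth inverse (itself), and which carries $N\times(-\infty,0)$ onto $N\times(0,\infty)$, I can write
\begin{equation*}
	B = F_{+}\circ\sigma\circ F_{-}^{-1}.
\end{equation*}
I would verify this identity by chasing the definitions on an arbitrary point: $F_{-}^{-1}(x)=(p,-t)$, then $\sigma(p,-t)=(p,t)$, then $F_{+}(p,t)=\gamma_{J\nu(p)}(t)=B(x)$, matching the defining formula.

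With the factorization in hand, the conclusion is formal. Since $F_{-}^{-1}$ is a diffeomorphism $U\to N\times(-\infty,0)$, $\sigma$ restricts to a diffeomorphism $N\times(-\infty,0)\to N\times(0,\infty)$, and $F_{+}$ is a diffeomorphism $N\times(0,\infty)\to U$, the composite $B$ is a diffeomorphism of $U$ onto itself, being a composition of diffeomorphisms. In particular $B$ is a smooth bijection with smooth inverse $B^{-1}=F_{-}\circ\sigma\circ F_{+}^{-1}$.

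I do not expect a serious obstacle here, since all the analytic content — the regularity and bijectivity of $F$ on each half — is already packaged in Theorem \ref{F+F-}; the remaining work is organizational. The one point deserving care is confirming that the two restrictions share the \emph{same} image $U$ (so that the domains and codomains in the composition match up) and that $\sigma$ genuinely swaps the two half-leaves through each base point $p$; both are guaranteed by the statement of Theorem \ref{F+F-} together with the fact that $F_{+}(p,\cdot)$ and $F_{-}(p,\cdot)$ trace the two opposite geodesic rays emanating in the characteristic direction $J\nu(p)$. Thus the proof is complete once the factorization is recorded.
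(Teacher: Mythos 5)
Your proposal is correct and matches the paper's own proof essentially verbatim: the paper likewise writes $B=F_{+}\circ g\circ (F_{-})^{-1}$ with $g(p,t)=(p,-t)$ (your $\sigma$) and concludes by Theorem \ref{F+F-} that $B$ is a composition of diffeomorphisms. Your extra verification of the factorization and the remark that $B^{-1}=F_{-}\circ\sigma\circ F_{+}^{-1}$ are fine but add nothing beyond the paper's argument.
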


\begin{remark}
	In general, for a Hadamard manifold (a simply connected
	manifold with nonpositive sectional curvature), weaker hypotheses on the
	convexity of the hypersurface may allow the outer billiard map to be well
	defined without being smooth. For instance, Proposition 1.3 of \cite{London}
	presents an example of a strictly convex surface $S$ in $\mathbb{R}^{3}$
	(that is, for each $p\in S$, the affine plane tangent to $S$ at $p$
	intersects $S$ only at $p$, near $p$), for which the outer billiard map does
	exist, but it is not smooth.
\end{remark}


The following proposition provides examples. Part (b) is a particular case
of part (a). We recall that a complex line in $\mathbb{C}H^{2}$ is a
complete totally geodesic surface which is a complex submanifold (its
tangent spaces are invariant by $J$). It is isometric to the real hyperbolic
plane $H^{2}\left( -4\right) $ of constant curvature $-4$. A complex line
admits an isometric reflection in $\mathbb{C}H^{2}$ with respect to it.

\begin{proposition}
	\label{Examples} \emph{a)} Let $H$ be a complex line in $\mathbb{C}H^{2}$ and let $N$
	be a quadratically convex compact connected oriented hypersurface invariant by the reflection
	with respect to $H$. Then $B$ preserves $H\cap U$ and coincides with the
	usual outer billiard map in $H$ associated with the 
	curve of positive geodesic curvature obtained by intersecting $N$ with $H.$
	
	\smallskip
	
	\emph{b)} Let $N$ be a geodesic sphere of radius $R$. Any complex line $H$ through
	the center $o$ of $N$ is preserved by the outer billiard map associated with 
	$N$. Thus, there is a $\mathbb{CP}^{1}$-worth of hyperbolic planes of
	constant curvature $-4$ through $o$ and the outer billiard is played outside
	the discs of radius $R$ centered at $o$ on each of them.
\end{proposition}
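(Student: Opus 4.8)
The plan is to prove part (a) directly and to recover part (b) as the special case in which $N$ is a geodesic sphere centered at a point $o\in H$. The geometric input I would use throughout is that the reflection $\sigma$ with respect to a complex line $H$ is a holomorphic isometric involution of $\mathbb{C}H^2$ whose fixed-point set is exactly $H$; holomorphicity (that $\sigma_*J=J\sigma_*$) follows because $\sigma$ restricts to the identity on the $J$-invariant space $T_qH$ and to $-\mathrm{id}$ on the $J$-invariant normal line at each $q\in H$, so $\sigma_*$ is complex linear.

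First I would establish that $\sigma$ intertwines all the data defining $B$. Since $\sigma$ is an isometry with $\sigma(N)=N$, it carries the mean curvature field of $N$ to itself, and hence preserves the inward unit normal: $\sigma_*\nu(q)=\nu(\sigma(q))$ for $q\in N$. Combined with $\sigma_*J=J\sigma_*$ and the fact that an isometry sends $\gamma_u$ to $\gamma_{\sigma_*u}$, this gives $\sigma\circ F=F\circ(\sigma\times\mathrm{id})$, and therefore $\sigma\circ B=B\circ\sigma$ on $U$. Because $H=\mathrm{Fix}(\sigma)$, any $x\in H\cap U$ satisfies $\sigma(B(x))=B(\sigma(x))=B(x)$, so $B(x)\in H$; as $B$ maps $U$ to $U$ we conclude $B(H\cap U)\subseteq H\cap U$, and the same argument applied to $B^{-1}$ (which commutes with $\sigma$ since $\sigma^2=\mathrm{id}$) shows that $B$ preserves $H\cap U$.

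Next I would identify the restricted map with the planar outer billiard of the curve $c:=N\cap H$. For $x\in H\cap U$ write $x=F(p,-t)=\gamma_{J\nu(p)}(-t)$ with $t>0$, as granted by Theorem \ref{F+F-}. Applying $\sigma$ and using $\sigma\circ F=F\circ(\sigma\times\mathrm{id})$ gives $F(\sigma(p),-t)=\sigma(x)=x=F(p,-t)$, so the injectivity of $F$ on $N\times(-\infty,0)$ forces $\sigma(p)=p$, i.e.\ $p\in N\cap H=c$. At such a fixed point $\nu(p)$ lies in the $(+1)$-eigenspace $T_pH$ of $\sigma_*$, and since $T_pH$ is $J$-invariant we get $J\nu(p)\in T_pH$; as $H$ is totally geodesic the whole ray $\gamma_{J\nu(p)}$ stays in $H$. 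Moreover $J\nu(p)$ is orthogonal to $\nu(p)$ inside the two-dimensional space $T_pH$, hence spans $T_pc$, so $\gamma_{J\nu(p)}$ is precisely the geodesic of $H$ tangent to $c$ at $p$, and $B(x)=\gamma_{J\nu(p)}(t)$ is the reflection of $x$ in the tangency point $p$, which is the defining recipe of the outer billiard in $H$. Positivity of the geodesic curvature of $c$ comes from quadratic convexity: because $H$ is totally geodesic, the geodesic curvature of $c$ at $p$ coincides, up to sign, with the normal curvature $\langle S_\nu J\nu(p),J\nu(p)\rangle$ of $N$ in the direction $J\nu(p)$, which is nonzero of constant sign because the shape operator $S_\nu$ is definite, so $c$ is convex.

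For part (b), any complex line $H$ through the center $o$ contains $o$, so the reflection $\sigma$ fixes $o$ and thus preserves the geodesic sphere $N$ of radius $R$; part (a) then applies verbatim, and $c=N\cap H$ is the geodesic circle of radius $R$ about $o$ inside $H\cong H^2(-4)$. The complex lines through $o$ are parametrized by $\mathbb{P}(T_o\mathbb{C}H^2)\cong\mathbb{CP}^1$, giving the claimed family. I expect the main obstacle to be not the symmetry bookkeeping but the final identification step: verifying carefully that $\gamma_{J\nu(p)}$ meets $c$ only at $p$ (so that it is genuinely the tangent geodesic of the convex curve rather than a secant) and that the orientation induced on $c$ by the orientation of $N$ matches the side-choice used in the planar billiard, so that the two maps coincide and not merely their unoriented foliations.
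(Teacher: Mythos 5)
Your proof is correct, and it takes a genuinely different route from the paper's on two points. The shared geometric core is the same: the reflection $\sigma$ preserves the mean curvature field and hence the inward normal, so for $p\in N\cap H$ one gets $\nu(p)\in T_{p}H$, then $J\nu(p)\in T_{p}H$ because $H$ is a complex line, and total geodesy of $H$ keeps the ray $\gamma_{J\nu(p)}$ inside $H$. Where you diverge: the paper works in the ball model $\mathbb{B}$, both to exhibit the reflection explicitly as $\rho(z,w)=(z,-w)$ (thereby \emph{proving} that the reflection exists and that its fixed set is exactly $H$ --- facts you take as given from the preamble) and, for part (b), to observe that a geodesic sphere centered at the origin is a Euclidean sphere, hence $\rho$-invariant; it then concludes preservation of $H\cap U$ rather tersely ``by definition of $B$''. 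You instead argue model-free and globally: the equivariance $\sigma\circ F=F\circ(\sigma\times\mathrm{id})$ yields $\sigma\circ B=B\circ\sigma$, so $B$ preserves $\mathrm{Fix}(\sigma)\cap U=H\cap U$, and --- this is a genuine improvement in completeness --- the injectivity of $F_{-}$ from Theorem \ref{F+F-} forces the foot point $p$ of the ray through any $x\in H\cap U$ to be $\sigma$-fixed, i.e.\ $p\in N\cap H$, a compatibility between the ambient foliation and the planar one that the paper leaves implicit. Your part (b) (an isometry fixing $o$ preserves distance spheres about $o$) is likewise more intrinsic than the paper's Euclidean-sphere computation. Your closing worries about tangency and orientation are at the same level of detail the paper dismisses as definitional, and your geodesic-curvature computation $\langle S_{p}J\nu(p),J\nu(p)\rangle>0$ correctly substantiates the positivity claim, which the paper asserts without calculation. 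In short: your version buys self-containedness of the equivariance bookkeeping and fills a small gap, at the cost of assuming (rather than constructing) the reflection, which is exactly what the paper's model-based passage supplies.
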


Notice that geodesic spheres in $\mathbb{C}H^{2}$, with the induced
Riemannian metric have not constant sectional curvature. They are Berger spheres;
they can be obtained by modifying in a suitable manner the lengths of the
fibers of the standard Hopf fibration $S^{3}\equiv SU\left( 2\right)
\rightarrow S^{2}$.

In the proof of Proposition \ref{Examples} we recall that isometric
reflections with respect a complex line $H$ do actually exist. In fact,
nontrivial isometric reflections are scarce, they exist only with respect
to complex lines or totally real planes, that is, the complete totally
geodesic surfaces of constant curvature $-4$ or $-1$, which are the only
totally geodesic surfaces.

\medskip

The question arises, whether a quadratically convex compact connected oriented hypersurface of $%
\mathbb{C}H^{2}$ such that all outer billiard trajectories are contained in
complex hyperbolic planes is necessarily a geodesic sphere. This subject was
addressed in \cite{Planar} in the Euclidean case.

\medskip

Next we present a theorem that generalizes
the geometric features (but not the dynamical ones) of \cite{Tabachnikov} and \cite{TabachHypPlane} (for the
complex Euclidean space and the hyperbolic plane, respectively) to the complex hyperbolic plane.

\begin{theorem}\label{TeoSymplectic}
	Let $N$ be a quadratically convex compact connected oriented hypersurface of $\mathbb{C}H^{2}$
	and let $U$ be the exterior of $N$. Then the outer billiard map $B$ is a
	symplectomorphism with respect to the K\"ahler form.
\end{theorem}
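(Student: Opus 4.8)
The plan is to reduce the identity $B^{*}\omega=\omega$ to a parity statement about a single geodesic variation. For $t>0$ set $\Phi(q,t)=\gamma_{J\nu(q)}(t)$ and $\Psi(q,t)=\gamma_{J\nu(q)}(-t)$; by Theorem \ref{F+F-} both are diffeomorphisms from $N\times(0,\infty)$ onto $U$, and by the very definition of the billiard map one has $B\circ\Psi=\Phi$, whence $B=\Phi\circ\Psi^{-1}$. Since $\Psi$ is a diffeomorphism, $\Psi^{*}$ is injective on forms, so the desired equality is equivalent to
\[
\Phi^{*}\omega=\Psi^{*}\omega \qquad\text{on } N\times(0,\infty).
\]
This is the identity I would establish; note that because it is a pointwise identity of $2$-forms, no appeal to the closedness of $\omega$ is needed at this stage.

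To treat both maps at once, I would consider $G\colon N\times\mathbb{R}\to\mathbb{C}H^{2}$, $G(q,s)=\gamma_{J\nu(q)}(s)$, and $\alpha=G^{*}\omega$; then $\Phi^{*}\omega$ is $\alpha$ restricted to $s>0$, while $\Psi^{*}\omega=\sigma^{*}\alpha$ for $\sigma(q,s)=(q,-s)$. Writing the coordinate fields, $\partial_{s}G=\dot\gamma(s)$ is parallel along the geodesic, while for $w\in T_{q}N$ the field $\partial_{w}G=:Y_{w}(s)$ is the Jacobi field with $Y_{w}(0)=w$ and $Y_{w}'(0)=\nabla_{w}(J\nu)=J\nabla_{w}\nu$. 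Since $\omega(X,Y)=\langle JX,Y\rangle$ and $J$ is parallel, comparing $\alpha$ with $\sigma^{*}\alpha$ reduces to showing, for all $w,w'\in T_{q}N$, that $s\mapsto\omega\!\left(Y_{w}(s),Y_{w'}(s)\right)$ is even and $s\mapsto\omega\!\left(\dot\gamma(s),Y_{w}(s)\right)$ is odd.

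I would compute these two functions using the complex–space–form structure of $\mathbb{C}H^{2}$. Along $\gamma_{J\nu(q)}$, with $T(s)=\dot\gamma(s)$, the vector $JT$ is parallel and equals the parallel transport of $-\nu(q)$, and the Jacobi operator $R(\cdot,T)T$ acts with eigenvalue $-4$ on the line $\mathbb{R}JT$ and with eigenvalue $-1$ on the holomorphic distribution $\mathcal{H}$ (the $J$–invariant complement of $\mathbb{R}\xi$ in $T_{q}N$, where $\xi=J\nu$). Decomposing the initial data of $Y_{w}$ along $\mathbb{R}T$, $\mathbb{R}JT$ and $\mathcal{H}$ and solving the Jacobi equation yields explicit expressions in $\cosh s$, $\sinh s$ and $\sinh 2s$. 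From these, $\omega\!\left(\dot\gamma(s),Y_{w}(s)\right)=\langle JT(s),Y_{w}(s)\rangle$ comes out as a multiple of $\sinh(2s)$, hence odd; and in $\omega\!\left(Y_{w}(s),Y_{w'}(s)\right)$ the only odd contribution is proportional to $\sinh(2s)\,[\langle Sw,w'\rangle-\langle w,Sw'\rangle]$, where $S$ is the shape operator.

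The crux is this last cancellation: the odd part vanishes exactly because the shape operator is self–adjoint, $\langle Sw,w'\rangle=\langle w,Sw'\rangle$. I expect the main work to lie here — organizing the Jacobi fields so that the mixed $\xi$–$\mathcal{H}$ terms collapse to the symmetric pairing of $S$ — and it is also the step that singles out $\xi=J\nu$ as the characteristic direction, since precisely this choice makes $JT$ parallel and isolates the $\sinh 2s$ term. Granting the two parity statements gives $\Phi^{*}\omega=\Psi^{*}\omega$, hence $B^{*}\omega=\omega$; combined with Theorem \ref{Bdifeomorph} this shows that $B$ is a symplectomorphism.
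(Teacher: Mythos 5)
Your proposal is correct and is essentially the paper's own argument in coordinate-free dress: the paper writes $B=F_{+}\circ g\circ (F_{-})^{-1}$ and verifies the matrix identity $G\,E(t)\,G=E(-t)$ for $E(\delta t)=D_{\delta t}^{T}[\omega]_{\mathcal{B}_{\delta t}}D_{\delta t}$, which is precisely your parity statement for the pullback of $\omega$ under $s\mapsto -s$, computed from the same Jacobi field lemma in a parallel frame adapted to $\gamma'$, $J\gamma'$ and the holomorphic complement. Your identification of the crucial cancellation as the self-adjointness of the shape operator is also faithful to the paper, where it enters as the symmetry of the matrix $A$, and your claimed computations (the $\sinh(2s)$ oddness of $\omega(\dot\gamma,Y_{w})$ and the collapse of the odd part of $\omega(Y_{w},Y_{w'})$ to $\tfrac{1}{2}\sinh(2s)\left(\langle Sw,w'\rangle-\langle w,Sw'\rangle\right)$) check out against the paper's explicit entries $E_{ij}(\delta t)$.
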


Regarding the dynamics of this system, an interesting problem would be to
find conditions on $N$ for the existence of three-periodic orbits.

For instance, if $N$ is a geodesic sphere of a small enough radius $R$, then
Proposition \ref{Examples} (b) implies a positive answer. In fact, the outer
billiard map is played outside discs of radius $R$ on hyperbolic planes of
constant curvature $-4$, so if $R$ is strictly smaller than the radius of the
inscribed circle of an ideal equilateral triangle, it is not difficult to
show that there are three-periodic orbits.

In the following we comment on the obstacles we found to address the problem
of the existence of three-periodic orbits for $N$ contained in a
sufficiently small ball, following Tabachnikov's arguments in \cite%
{TabachPeriodic}.

The main difficulty was that in $\mathbb{C}H^{2}$, although three distinct
points do determine a geodesic triangle, a distinguished triangular surface
dressing it exists (in contrast to $\mathbb{C}^{2}$) only if the three
points lie in a totally geodesic surface, and these are very scarce (just
totally real or complex surfaces).

Let $M$ be a Hadamard K\"{a}hler manifold with fundamental symplectic form $%
\omega $. Let $p_{1},p_{2},p_{3}$ be three points in $M$. The symplectic
area of the triangle determined by those points is well defined by%
\begin{equation*}
	\mathcal{A}\left( p_{1},p_{2},p_{3}\right) =\int_{\Delta }\omega \text{,}
\end{equation*}%
where $\Delta $ is any (suitably oriented) smooth surface whose boundary is
the triangle. Note that the integral does not depend on the choice of $%
\Delta $ since the form $\omega $ is closed.

Let $U$ be as above the exterior of $N$. To avoid considering the size of $N$%
, we focus on necessary conditions for $\left( z_{1},z_{2},z_{3}\right) \in
U\times U\times U$ to be a three-periodic orbit of $B$. The analogue of
Tabachnikov's method using the Morse and Lusternik-Schnirelman theory, would
be that the triplet $\left( p_{1},p_{2},p_{3}\right) \in N\times N\times N$
consisting of the middle points of the geodesic triangle determined by $%
\left( z_{1},z_{2},z_{3}\right) $ is a critical point of the function $A=_{%
	\text{def}}\left. \mathcal{A}\right\vert _{N\times N\times N}$. We found a
geometric condition for $\left( p_{1},p_{2},p_{3}\right) $ to be so,
involving $J\left( \nu \left( p_{i}\right) \right) $, which unfortunately we
were able to prove true only if the triplets lie in a totally geodesic
surface. So, we were not able to close the
argument. We mention that in between we needed and found a concise
expression for the differential of $A$ (in terms of Jacobi fields).

\section{Preliminaries}

First we introduce the complex hyperbolic plane. The general references 
for its definition and geometric properties are \cite{refCH2} and \cite{BridsonHaefliger}.

Let $\mathbb{C}H^{2}$ be the two-dimensional complex hyperbolic space
equipped with the Fubini--Study metric $\langle \cdot ,\cdot \rangle $ of
constant holomorphic sectional curvature $-4$. It is the unique
four-dimensional simply connected complex complete Riemannian manifold whose complex structure $J$ and curvature tensor $R$ are parallel (in particular, it is a K\"{a}hler symmetric space), such that
\begin{equation}
	\langle R(Ju,u)u,Ju\rangle =-4\,\,\,\,\,\,\text{and}\,\,\,\,\,\,\langle
	R(v,u)u,v\rangle =-1  \label{Curvature}
\end{equation}%
for any unit vectors $u,v\in T_{p}\mathbb{C}H^{2}$ with $v$ perpendicular to both $u$ and $J(u)$, 
for all $p\in \mathbb{C}H^{2}$.

It is well known that Jacobi fields along a unit speed geodesic $\gamma $,
which appear as geodesic variations of $\gamma $, are exactly those vector
fields $K$ along $\gamma $ satisfying the equation 
\begin{equation}
	\dfrac{D^{2}K}{dt^{2}}+R(K,\gamma ^{\prime })\gamma ^{\prime }=0\text{.}
	\label{Jacobi}
\end{equation}

\begin{lemma}
	\label{lemmaJF} Let $K$ be a Jacobi field along a unit speed geodesic $%
	\gamma $ in $\mathbb{C}H^{2}$ such that 
	\begin{equation*}
		K(0)=a\gamma ^{\prime }(0)+v \text{\ \ \ \ \ \ and \ \ \ \ \ \ }
		\tfrac{DK}{dt}(0)=w+bJ\gamma ^{\prime }(0), 
	\end{equation*}
	with $a,b\in \mathbb{R}$ and $%
	v,w $ perpendicular to both $\gamma ^{\prime }(0)$ and $J\gamma ^{\prime }(0)$. Then 
	\begin{equation}
		K(r)=a\gamma ^{\prime }(r)+\cosh (r)V(r)+\tfrac{b}{2}\sinh (2r)J\gamma
		^{\prime }(r)+\sinh (r)W(r),  \label{CampoJacobi}
	\end{equation}%
	where $V,W$ are the parallel vector fields along $\gamma $ with initial
	conditions $V(0)=v$ and $W(0)=w$.
\end{lemma}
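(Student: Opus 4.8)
The plan is to solve the Jacobi equation by decomposing $K$ into components along a natural parallel frame adapted to the complex structure. The key observation is that the curvature tensor $R$ of $\mathbb{C}H^2$ acts very differently on the two-dimensional subspace $\mathrm{span}\{\gamma', J\gamma'\}$ (the ``holomorphic'' directions, curvature $-4$) than on its orthogonal complement (curvature $-1$), as recorded in \eqref{Curvature}. So first I would fix a parallel orthonormal frame $E_1=\gamma'$, $E_2=J\gamma'$, $E_3, E_4$ along $\gamma$, where $E_3,E_4$ are the parallel extensions of an orthonormal basis of the subspace perpendicular to both $\gamma'(0)$ and $J\gamma'(0)$; note that $J\gamma'=E_2$ is itself parallel since $J$ is parallel and $\gamma'$ is parallel. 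I would then write $K(r)=\sum_i f_i(r) E_i(r)$ and convert the single vector Jacobi equation \eqref{Jacobi} into four scalar ODEs for the coefficient functions $f_i$.

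The decisive step is computing $R(E_i,\gamma')\gamma'$ for each frame direction. Using \eqref{Curvature} together with the symmetries of the K\"ahler curvature tensor, I expect to find that $R(E_1,\gamma')\gamma'=0$ (the radial direction is flat, as always for Jacobi fields), that $R(E_2,\gamma')\gamma'=4E_2$ since $E_2=J\gamma'$ spans the holomorphic plane of curvature $-4$, and that $R(E_j,\gamma')\gamma'=E_j$ for $j=3,4$ since these lie in directions of curvature $-1$. Substituting into \eqref{Jacobi} decouples the system completely: $f_1''=0$, $f_2''-4f_2=0$, and $f_j''-f_j=0$ for $j=3,4$. This is the technical heart, and the main (though modest) obstacle is verifying the off-diagonal curvature terms vanish, i.e.\ that $R(E_i,\gamma')\gamma'$ has no component along $E_k$ for $k\ne i$; this follows from the explicit form of the curvature tensor of a complex space form, but must be checked using the given normalizations and the Bianchi/K\"ahler symmetries.

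With the system decoupled, I would solve each scalar ODE against the prescribed initial data. The initial conditions give $f_1(0)=a$, $f_1'(0)=0$ (since $\tfrac{DK}{dt}(0)=w+bJ\gamma'(0)$ has no $\gamma'$-component), hence $f_1(r)=a$, matching the term $a\gamma'(r)$. For the holomorphic coefficient, $f_2(0)=0$ and $f_2'(0)=b$ give $f_2(r)=\tfrac{b}{2}\sinh(2r)$, matching $\tfrac{b}{2}\sinh(2r)J\gamma'(r)$. For the orthogonal block, the $E_3,E_4$ combination encoding $v$ has initial value $v$ and initial derivative $0$, yielding the factor $\cosh(r)$ in front of the parallel field $V$; the combination encoding $w$ has initial value $0$ and initial derivative $w$, yielding the factor $\sinh(r)$ in front of $W$. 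Assembling these four pieces reproduces formula \eqref{CampoJacobi} exactly. The only care needed is bookkeeping: checking that $V$ and $W$, defined as parallel extensions of $v$ and $w$, are precisely the linear combinations of $E_3,E_4$ that arise, which is immediate since parallel fields are constant-coefficient combinations of a parallel frame.
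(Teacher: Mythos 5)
Your proposal follows essentially the same route as the paper: both arguments rest on the facts that $J\gamma'$ and the parallel extensions of vectors perpendicular to $\gamma'$ and $J\gamma'$ stay in curvature eigendirections, reducing \eqref{Jacobi} to decoupled constant-coefficient scalar ODEs; the paper simply runs the computation backwards, verifying that the right-hand side of \eqref{CampoJacobi} satisfies \eqref{Jacobi} (uniqueness with the given initial data then being immediate), while you integrate the system forward from the initial conditions. One slip to correct: with the paper's conventions \eqref{Curvature} and \eqref{Jacobi}, the curvature identities are $R(J\gamma',\gamma')\gamma'=-4\,J\gamma'$ and $R(E_j,\gamma')\gamma'=-E_j$ for $j=3,4$, not $+4E_2$ and $+E_j$ as you state; taken literally, your signs substituted into \eqref{Jacobi} would yield $f_2''+4f_2=0$ and $f_j''+f_j=0$, i.e.\ trigonometric (spherical) solutions rather than hyperbolic ones. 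Since the ODEs you actually write down, $f_2''-4f_2=0$ and $f_j''-f_j=0$, are the correct ones, this is an internal sign inconsistency rather than a conceptual error, and the remainder of your argument goes through. Your remark that the vanishing of the off-diagonal entries of the Jacobi operator needs the explicit curvature tensor of a complex space form (the diagonal normalizations \eqref{Curvature} alone do not formally force $J\gamma'$ to be an eigenvector) is well taken --- the paper cites \eqref{Curvature} for this without further comment, so on this point your write-up is, if anything, more careful.
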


\begin{proof}
	Since $J\circ \gamma ^{\prime }$, $V$ and $W$ are parallel vector fields
	along $\gamma $ and $V(r)$ and $W(r)$ are perpendicular 
	to both $\gamma ^{\prime }(r)$ and $J\gamma ^{\prime }(r)$ for all $r$, we have by (\ref{Curvature}) that
	\begin{equation*}
		R(J\gamma ^{\prime }(r),\gamma ^{\prime }(r))\gamma ^{\prime }(r)=-4J\gamma
		^{\prime }(r),
	\end{equation*}%
	\begin{equation*}
		R(V(r),\gamma ^{\prime }(r))\gamma ^{\prime }(r)=-V(r)\hspace{0.3cm}\text{and%
		}\hspace{0.3cm}R(W(r),\gamma ^{\prime }(r))\gamma ^{\prime }(r)=-W(r).
	\end{equation*}
	Now, a straightforward computation yields that the vector field in (\ref%
	{CampoJacobi}) satisfies the Jacobi equation (\ref{Jacobi}), as desired.
\end{proof}

\bigskip

Let $N$ be a quadratically convex compact connected oriented hypersurface of $\mathbb{C}H^{2}$
and let $\nu $ be the unit normal vector field pointing inwards, that is, in
the direction of the mean curvature vector field of $N$.

For each $p\in N$, we denote by $S_{p}:T_{p}N\rightarrow T_{p}N$ the shape
operator of $N$, that is, 
\begin{equation}
	S_{p}(x)=-\nabla _{x}\,\nu ,  \label{shapeO}
\end{equation}%
for $x\in T_{p}N$, where $\nabla $ denotes the Levi-Civita connection of $%
\mathbb{C}H^{2}$.

We recall the definition of the 
smooth map
\begin{equation*}
	F:N\times \mathbb{R}\rightarrow \mathbb{C}H^{2}\text{, \ \ \ \ \ \ \ }F(q,r)=\gamma _{J\nu (q)}(r)\text{,}
\end{equation*}%
for $q\in N$ and $r\in \mathbb{R}$, where $\gamma _{v}$ denotes as before the unique
geodesic on $\mathbb{C}H^{2}$ with initial velocity $v$. 

\begin{lemma}
	\label{K_x} Let $x\in T_{p}N$. Then, 
	\begin{equation*}
		dF_{(p,t)}(x,0)=K_{x}(t)
	\end{equation*}
	for all $t\in \mathbb{R}$, where $K_{x}$ is the Jacobi field along $\gamma
	_{J\nu (p)}$ with initial conditions 
	\begin{equation}
		K_{x}(0)=x\hspace{1cm}\text{and}\hspace{1cm}\tfrac{DK_{x}}{dr}(0)
		=-(J\circ S_{p})(x)\text{.}  \label{condK_x}
	\end{equation}
\end{lemma}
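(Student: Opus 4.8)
The plan is to exhibit $dF_{(p,t)}(x,0)$ as the value at time $t$ of a suitable geodesic variation field, and then identify its initial conditions. First I would pick a smooth curve $\alpha:(-\varepsilon,\varepsilon)\to N$ with $\alpha(0)=p$ and $\alpha'(0)=x$, and consider the smooth two-parameter map
\begin{equation*}
f(s,r)=F(\alpha(s),r)=\gamma_{J\nu(\alpha(s))}(r).
\end{equation*}
For each fixed $s$ the curve $r\mapsto f(s,r)$ is a geodesic, so $f$ is a variation of $\gamma_{J\nu(p)}$ through geodesics; hence its variation field $K_x(r)=\tfrac{\partial}{\partial s}\big|_{s=0}f(s,r)$ is automatically a Jacobi field along $\gamma_{J\nu(p)}$. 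By construction $dF_{(p,t)}(x,0)=\tfrac{d}{ds}\big|_{s=0}f(s,t)=K_x(t)$, so it only remains to check that $K_x$ has the stated initial conditions, a Jacobi field being determined by $K_x(0)$ and $\tfrac{DK_x}{dr}(0)$.

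The value $K_x(0)$ is immediate, since $f(s,0)=\gamma_{J\nu(\alpha(s))}(0)=\alpha(s)$ gives $K_x(0)=\alpha'(0)=x$. For the covariant derivative I would invoke the symmetry lemma for the torsion-free connection $\nabla$ to interchange the order of differentiation,
\begin{equation*}
\frac{DK_x}{dr}(0)=\frac{D}{dr}\Big|_{r=0}\frac{\partial f}{\partial s}\Big|_{s=0}=\frac{D}{ds}\Big|_{s=0}\frac{\partial f}{\partial r}\Big|_{r=0}.
\end{equation*}
Since $\tfrac{\partial f}{\partial r}\big|_{r=0}=J\nu(\alpha(s))$ is the initial velocity of the geodesic $r\mapsto f(s,r)$, and $J$ is parallel, this equals $J\big(\tfrac{D}{ds}\big|_{s=0}\nu(\alpha(s))\big)=J(\nabla_x\nu)$. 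Finally, the definition (\ref{shapeO}) of the shape operator gives $\nabla_x\nu=-S_p(x)$, whence $\tfrac{DK_x}{dr}(0)=-(J\circ S_p)(x)$, as claimed.

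The computation is essentially routine once the geodesic-variation framework is in place; the only points requiring care are the interchange of covariant derivatives (which relies on $\nabla$ being torsion-free) and the parallelism of $J$ (valid because $\mathbb{C}H^2$ is K\"ahler, so $\nabla J=0$). I do not anticipate a genuine obstacle: the substance of the lemma is the observation that $F$ is a variation through geodesics, so that $dF_{(p,t)}(x,0)$ solves the Jacobi equation (\ref{Jacobi}) a priori, together with the short bookkeeping that identifies its initial velocity with $-(J\circ S_p)(x)$.
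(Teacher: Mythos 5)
Your proof is correct and follows essentially the same route as the paper: the same geodesic variation $f(s,r)=\gamma_{J\nu(\alpha(s))}(r)$, the same interchange of covariant derivatives via the symmetry lemma, and the same use of $\nabla J=0$ together with the definition of the shape operator to identify $\tfrac{DK_x}{dr}(0)=-(J\circ S_p)(x)$. Your remarks making explicit the torsion-free hypothesis and the a priori Jacobi property of variation fields are accurate elaborations of steps the paper treats implicitly.
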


\begin{proof}
	Let $\alpha $ be a smooth curve in $N$ whose initial velocity is $x$. We
	consider the geodesic variation of $\gamma _{J\nu (p)}$ given by $%
	(s,r)\rightarrow F(\alpha (s),r)$ for $(s,r)\in (-\varepsilon ,\varepsilon
	)\times \mathbb{R}$. Let us see that the corresponding Jacobi vector field $%
	K $ along $\gamma _{J\nu (p)}$ is $K_{x}$. We compute
	\begin{equation*}
		K\left( r\right) =\left. \tfrac{d}{ds}\right\vert _{0}F(\alpha (s),r)=\left. 
		\tfrac{d}{ds}\right\vert _{0}\gamma _{J\nu (\alpha \left( s\right) )}(r).
	\end{equation*}%
	Hence $K\left( 0\right) =\alpha ^{\prime }\left( 0\right) =x$ and%
	\begin{eqnarray*}
		\tfrac{DK}{dr}(0) &=&\left. \tfrac{D}{dr}\right\vert _{0}\left. \tfrac{d}{ds}%
		\right\vert _{0}\gamma _{J\nu (\alpha \left( s\right) )}(r)=\left. \tfrac{D}{%
			ds}\right\vert _{0}\left. \tfrac{d}{dr}\right\vert _{0}\gamma _{J\nu (\alpha
			\left( s\right) )}(r) \\
		&=&\left. \tfrac{D}{ds}\right\vert _{0}J\nu (\alpha \left( s\right) )=\nabla
		_{x}J\nu \\
		&=& J\left( \nabla _{x}\,\nu \right) =-(J\circ S_{p})(x),
	\end{eqnarray*}%
	since $J$ is parallel (see (\ref{shapeO})).
\end{proof}

\section{Proofs of the results}

Before proving Proposition \ref{Examples} we give details of the definitions
and facts that precede its statement. This is the only part of the article
where we use a model for $\mathbb{C}H^{2}$. We consider the canonical
Hermitian inner product on $\mathbb{C}^{2}$, that is, $\left\langle \left(
z_{1},z_{2}\right) ,\left( w_{1},w_{2}\right) \right\rangle =\bar{z}%
_{1}w_{1}+\bar{z}_{2}w_{2}$, and denote as usual 
$\left\vert z\right\vert ^{2}=\left\langle z,z\right\rangle $. 
Let $\mathbb{B}=\left\{p\in \mathbb{C}^{2}\mid \left\vert p\right\vert ^{2}<1\right\} $ 
be the ball model of $\mathbb{%
	C}H^{2}$, where $\mathbb{B}$ is endowed with the Riemannian metric with distance $d$
given by%
\begin{equation}
	\cosh ^{2}d\left( p,q\right) =\frac{\left( 1-\left\langle p,q\right\rangle
		\right) \left( 1-\left\langle q,p\right\rangle \right) }{\left( 1-\left\vert
		p\right\vert ^{2}\right) \left( 1-\left\vert q\right\vert ^{2}\right) }
	\label{distance}
\end{equation}%
(see 10.24 in \cite{BridsonHaefliger}). The restriction $I$ of the canonical
complex structure of $\mathbb{C}^{2}$ (that is, multiplication by $i$)
induces the complex structure on $\mathbb{B}$.

By (\ref{distance}), the map $I$ and the reflection $\rho :\mathbb{%
	B\rightarrow B}$, $\rho \left( z,w\right) =\left( z,-w\right) $, are
isometries. Now, $\left( \mathbb{C\times }\left\{ 0\right\} \right) \cap 
\mathbb{B}$ is totally geodesic, since it is a connected set of fixed points
of an isometry (see for instance Proposition 10.3.6 in \cite{Carlos}). By (%
\ref{Curvature}), it is isometric to the hyperbolic plane with (constant)
curvature $-4$.

\bigskip

\begin{proof}[Proof of Proposition \protect\ref{Examples}]
	a) Since the group of orientation preserving isometries of $\mathbb{C}H^{2}$
	acts transitively on the set of complex lines, we may assume without loss of
	generality that $H=\left( \mathbb{C\times }\left\{ 0\right\} \right) \cap 
	\mathbb{B}$.
	
	Let $\nu $ be the unit normal vector field on $N$ pointing inwards
	and let $p\in N\cap H$.
	Since by hypothesis $N$ is invariant by the reflection $\rho $, we have that 
	$d\rho _{p}\left( T_{p}N\right) =T_{p}N$. In particular, $d\rho _{p}\left(
	\nu \left( p\right) \right) =\varepsilon \nu \left( p\right) $ with $%
	\varepsilon =\pm 1$, since $\rho $ is an isometry. But $\rho $ preserves the
	mean curvature vector at $p$, hence $\varepsilon =1$. Thus, $\nu \left(
	p\right) \in T_{p}H$ since $T_{p}H=\left\{ w\in T_{p}\mathbb{C}H^{2}\mid
	d\rho _{p}\left( w\right) =w\right\} $. Now, the fact that $H$ is a complex
	line implies that $J\left( \nu \left( p\right) \right) $ is also in $T_{p}H$%
	. Finally, as $H$ is totally geodesic, by definition of the billiard map $B$
	on $U$, $B$ preserves $H\cap U$ and coincides with the usual outer billiard
	map in $H$ associated with the curve of positive geodesic curvature obtained by
	intersecting $N$ with $H$.
	
	\smallskip
	
	b) We may suppose that the geodesic sphere is centered at the origin of $%
	\mathbb{B}$. By (\ref{distance}), it is a Euclidean sphere, and so invariant
	by the reflection with respect to any complex line through the origin.
\end{proof}

\bigskip

\begin{proof}[Proof of Theorem \protect\ref{F+F-}]
	We prove that $F_{+}$ is a diffeomorphism (the case $F_{-}$ is similar). 
	After showing that the image is contained in the exterior of $N$, we prove
	that it is a local diffeomorphism using the Inverse Function Theorem and obtain
	the global condition with a topological argument.
	
	First, we observe that the image of $F_{+}$ is contained on $U$. Indeed,
	suppose that there exists $(p,t)\in N\times (0,\infty )$ such that $%
	F_{+}(p,t)=q\notin U$. Since $N$ is embedded in $\mathbb{C}H^{2}$ as the
	boundary of a convex body (see Theorem 1 in \cite{Alex}), the geodesic
	segment joining $p$ with $q$, that is, the restriction of $\gamma _{J\nu
		_{p}}$ to $\left[ 0,t\right] $, lies in this body. On the other hand, since $%
	N$ is quadratically convex, we have that $\gamma _{J\nu _{p}}(s)$ is in $U$ for
	sufficiently small $s>0$, a contradiction.
	
	Now, we fix $p\in N$ and $t>0$. We will see that $(dF_{+})_{(p,t)}$ is an
	isomorphism. Let $u$ be a unit tangent vector of $N$ at $p$ orthogonal to $%
	J\nu (p)$. We consider the basis of $T_{(p,t)}(N\times (0,\infty ))$ given
	by 
	\begin{equation*}
		\mathcal{C}_{t}=\left\{ \left( 0,\left. \tfrac{\partial }{\partial s}%
		\right\vert _{t}\right) ,(J\nu (p),0),(u,0),(Ju,0)\right\} .
	\end{equation*}%
	Besides, we call $\mathcal{B}_{t}$ the basis of $T_{\gamma _{J\nu (p)}(t)}%
	\mathbb{C}H^{2}$ obtained by the parallel transport of the basis $\{\nu
	(p),J\nu (p),u,Ju\}$ along $\gamma _{J\nu (p)}$, between $0$ and $t$, that
	is, 
	\begin{equation}
		\mathcal{B}_{t}=\{\tau _{0}^{t}(\nu (p)),\tau _{0}^{t}(J\nu (p)),\tau
		_{0}^{t}(u),\tau _{0}^{t}(Ju)\}\text{.}  \label{baseBt}
	\end{equation}%
	We compute the matrix of $(dF_{+})_{(p,t)}$ with respect to the bases $%
	\mathcal{C}_{t}$ and $\mathcal{B}_{t}$. By Lemma \ref{K_x} we have that $%
	dF_{(p,t)}(x,0)=K_{x}(t)$ for all $x\in T_{p}N$, hence, we need to compute
	the Jacobi fields $K_{J\nu \left( p\right) }$, $K_{u}$ and $K_{Ju}$.
	
	We denote by $A$ the (symmetric) $(3\times 3)$-matrix of the shape operator $%
	S_{p}$ at $p$ with respect to the orthonormal basis $\{J\nu (p),u,Ju\}$ of $%
	T_{p}N$. By the usual abuse of notation, we put $K^{\prime }=\frac{DK}{dr}$.
	By (\ref{condK_x}), we have that
	\begin{equation*}
		\begin{array}{ccc}
			K_{J\nu _{p}}^{\prime }(0) & = & A_{11}\nu _{p}-A_{12}Ju+A_{13}u, \\ 
			K_{u}^{\prime }(0) & = & A_{12}\nu _{p}-A_{22}Ju+A_{23}u, \\ 
			K_{Ju}^{\prime }(0) & = & A_{13}\nu _{p}-A_{23}Ju+A_{33}u.
		\end{array}%
	\end{equation*}
	
	Using (\ref{CampoJacobi}) we can compute the Jacobi fields $K_{J\nu _{p}}$, $%
	K_{u}$ and $K_{Ju}$. Since $K_{x}\left( 0\right) =x$ and $J\circ \,\tau
	_{0}^{t}=\tau _{0}^{t}\,\circ J$, in terms of the basis $\mathcal{B}_{t}$ we
	obtain%
	\begin{equation*}
		\begin{array}{c}
			dF_{(p,t)}(J\nu _{p},0)=A_{11}\cosh t\sinh t\,\tau _{0}^{t}(\nu _{p})+\tau
			_{0}^{t}(J\nu _{p})+A_{13}\sinh t\,\tau _{0}^{t}(u)-A_{12}\sinh t\,\tau
			_{0}^{t}(Ju), \\ 
			\\ 
			dF_{(p,t)}(u,0)=A_{12}\cosh t\sinh t\,\tau _{0}^{t}(\nu _{p})+(\cosh
			t+A_{23}\sinh t)\,\tau _{0}^{t}(u)-A_{22}\sinh t\,\tau _{0}^{t}(Ju), \\ 
			\\ 
			dF_{(p,t)}(Ju,0)=A_{13}\cosh t\sinh t\,\tau _{0}^{t}(\nu _{p})+A_{33}\sinh
			t\,\tau _{0}^{t}(u)+(\cosh t-A_{23}\sinh t)\,\tau _{0}^{t}(Ju).%
		\end{array}%
	\end{equation*}%
	On the other hand, it is easy to see that $dF_{(p,t)}\left( 0,\left. \frac{%
		\partial }{\partial s}\right\vert _{t}\right) =\tau _{0}^{t}(J\nu _{p})$.
	Now, putting $D_{t}=[(dF_{+})_{(p,t)}]_{\mathcal{C}_{t},\mathcal{B}_{t}}$ we
	have 
	\begin{equation}
		D_{t}=%
		\begin{pmatrix}
			0 & A_{11}\cosh t\sinh t & A_{12}\cosh t\sinh t & A_{13}\cosh t\sinh t \\ 
			1 & 1 & 0 & 0 \\ 
			0 & A_{13}\sinh t & \cosh t+A_{23}\sinh t & A_{33}\sinh t \\ 
			0 & -A_{12}\sinh t & -A_{22}\sinh t & \cosh t-A_{23}\sinh t%
		\end{pmatrix}%
		\text{.}  \label{D_t}
	\end{equation}%
	A straightforward computation yields 
	\begin{equation*}
		\det D_{t}=-\cosh t\sinh t(\cosh
		^{2}t~A_{11}+\sinh ^{2}t\det A), 
	\end{equation*}
	which is different from zero since $t>0$ 
	and $A_{11}$ and $\det A$ are positive, as $N$ is quadratically convex
	and $\nu $ points inwards. Therefore, $dF_{(p,t)}$ is an isomorphism for any 
	$(p,t)$ and thus
	$F_{+}$ is a local diffeomorphism by the Inverse Function Theorem.
	
	Next we show that $F_{+}:N\times (0,\infty )\rightarrow U$ is proper. Let $K$
	be a compact set in $U$. Let $r_{0}=d\left( N,K\right) $ be the distance between
	$N$ and $K$ 
	and $r_{1}=\max
	\left\{ d\left( p,q\right) \mid p\in N,q\in K\right\} $. Let us see that $%
	\left( F_{+}\right) ^{-1}\left( K\right) $ is a closed set contained in $%
	N\times \left[ r_{0},r_{1}\right] $ and so it is compact. Indeed, suppose
	that $F_{+}\left( q,t\right) =\gamma _{J\nu \left( q\right) }\left( t\right)
	\in K$. Since $\mathbb{C}H^{2}$ is a Hadamard manifold, the distance between 
	$q=\gamma _{J\nu \left( q\right) }\left( 0\right) $ and $F_{+}\left(
	q,t\right) $ is equal to $t$. Hence, $r_{0}\leq t\leq r_{1}$.
	
	Since $F_{+}$ is a proper local homeomorphism, it is a covering map 
	(see for instance Problem 11-9 in \cite{Lee}). Now, $N$
	is homeomorphic to $S^{3}$ by \cite{Alex}, and so $U$ (the exterior) is
	simply connected. Therefore $F_{+}$ is a diffeomorphism, as desired.
\end{proof}

\bigskip

\begin{proof}[Proof of Theorem \protect\ref{Bdifeomorph}]
	We observe that $B=F_{+}\circ g\circ (F_{-})^{-1}$, where 
	\begin{equation*}
		g:N\times (-\infty ,0)\rightarrow N\times (0,\infty ),\hspace{1cm}%
		g(p,t)=(p,-t)
	\end{equation*}%
	is a diffeomorphism. Hence, $B$ is a diffeomorphism by Theorem \ref{F+F-}.
\end{proof}

\bigskip

We consider the associated fundamental symplectic form $\omega $ defined by $%
\omega (\cdot ,\cdot )=\langle J\cdot ,\cdot \rangle $, where $J$ is the
complex structure on $\mathbb{C}H^{2}$. We fix $t\neq 0$ and compute the
matrix of $\omega $ with respect to $\mathcal{B}_{t}$ in (\ref{baseBt}).
Using, as before, that $J$ is parallel, it is not difficult to see
that $[\omega ]_{\mathcal{B%
	}_{t}}=$ diag$\left( j,j\right) $, where $j=%
\begin{pmatrix}
	0 & -1 \\ 
	1 & 0%
\end{pmatrix}%
$.

\bigskip

\begin{proof}[Proof of Theorem \protect\ref{TeoSymplectic}]
	Given $q\in U$, by Theorem \ref{F+F-} 
	there exists a unique $(p,t)\in N\times (0,\infty )$ such
	that $q=F_{-}(p,-t)$. Calling $H_{t}=[(dB)_{q}]_{\mathcal{B}_{-t},\,\mathcal{%
			B}_{t}}$, we want to show that 
	\begin{equation}
		H_{t}^{T}\,[\omega ]_{\mathcal{B}_{t}}\,H_{t}=[\omega ]_{\mathcal{B}_{-t}}
		\label{Bsimplect}
	\end{equation}%
	for all $p\in N$ and $t>0$. By the proof of Theorem \ref{Bdifeomorph} we
	have that%
	\begin{equation*}
		H_{t}=D_{t}\,G\,(D_{-t})^{-1}\text{,}
	\end{equation*}
	where $G=[(dg)_{(-t,p)}]_{\mathcal{C}_{-t},\,\mathcal{C}_{t}}=\text{diag}%
	\left( -1,1,1,1\right) $ and $D_{t}$ is as in (\ref{D_t}). Hence, (\ref%
	{Bsimplect}) holds if and only if 
	\begin{equation}
		G\,D_{t}^{T}[\omega ]_{\mathcal{B}_{t}}\,D_{t}G=D_{-t}^{T}\,[\omega ]_{%
			\mathcal{B}_{-t}}\,D_{-t}.  \label{Bsimplect2}
	\end{equation}
	
	For $\delta =\pm 1$, we call $E(\delta t)=D_{\delta t}^{T}\,[\omega ]_{%
		\mathcal{B}_{\delta t}}\,D_{\delta t}$. So, (\ref{Bsimplect2}) translates
	into $GE(t)G=E(-t)$. Since $[\omega ]_{\mathcal{B}_{\delta t}}=$ diag$\left(
	j,j\right) $ and $E\left( \delta t\right) $ is skew-symmetric, a
	straightforward computation yields%
	\begin{equation*}
		\begin{array}{lll}
			E_{21}(\delta t) & = & -\delta A_{11}\cosh t\sinh t, \\ 
			E_{31}(\delta t) & = & -\delta A_{12}\cosh t\sinh t, \\ 
			E_{41}(\delta t) & = & -\delta A_{13}\cosh t\sinh t, \\ 
			E_{32}(\delta t) & = & (A_{12}A_{23}-A_{13}A_{22})\sinh ^{2}t, \\ 
			E_{42}(\delta t) & = & (A_{12}A_{33}-A_{13}A_{23})\sinh ^{2}t, \\ 
			E_{43}(\delta t) & = & (A_{22}A_{33}-A_{23}^{2})\sinh ^{2}t+\cosh ^{2}t\text{%
				,}%
		\end{array}%
	\end{equation*}%
	from which $GE(t)G=E(-t)$ follows for all $t>0$, as desired.
\end{proof}

\vspace{0.5cm}

\noindent Yamile Godoy. Email: yamile.godoy@unc.edu.ar.

\noindent Marcos Salvai. Email: marcos.salvai@unc.edu.ar

\medskip

\noindent FAMAF (Universidad Nacional de C\'ordoba) and CIEM (Conicet),

\noindent Av.\ Medina Allende s/n, Ciudad Universitaria, 

\noindent X5000HUA C\'ordoba,
Argentina.

\end{document}